\newcommand{\bburl}[1]{\textcolor{blue}{\url{#1}}}
\newcommand{\seqnum}[1]{\href{https://oeis.org/#1}{\rm \underline{#1}}}
\newtheorem{thm}{Theorem}[section]
\newtheorem{cor}[thm]{Corollary}
\newtheorem{lem}[thm]{Lemma}
\newtheorem{defi}[thm]{Definition}
\newtheorem{rek}[thm]{Remark}
\newtheorem{prob}[thm]{Problem}
\DeclareFixedFont{\ttb}{T1}{txtt}{bx}{n}{12} 
\DeclareFixedFont{\ttm}{T1}{txtt}{m}{n}{12}  
\definecolor{deepblue}{rgb}{0,0,0.5}
\definecolor{deepred}{rgb}{0.6,0,0}
\definecolor{deepgreen}{rgb}{0,0.5,0}
\newcommand\pythonstyle{\lstset{
language=Python,
basicstyle=\ttm,
morekeywords={self},              
keywordstyle=\ttb\color{deepblue},
emph={MyClass,__init__},          
emphstyle=\ttb\color{deepred},    
stringstyle=\color{deepgreen},
frame=tb,                         
showstringspaces=false
}}
\newcommand\pythoninline[1]{{\pythonstyle\lstinline!#1!}}
\definecolor{ao}{rgb}{0.0, 0.5, 0.0}
\numberwithin{equation}{section}
\DeclareFontFamily{U}{mathx}{}
\DeclareFontShape{U}{mathx}{m}{n}{<-> mathx10}{}
\DeclareSymbolFont{mathx}{U}{mathx}{m}{n}
\DeclareMathAccent{\widehat}{0}{mathx}{"70}
\DeclareMathAccent{\widecheck}{0}{mathx}{"71}
\begin{document}

\title{Linear Recurrences of Generalized Schreier Sets Revisited}

\author[H. V. Chu]{H\`ung Vi\d{\^e}t Chu}
\email{\textcolor{blue}{\href{mailto:hchu@wlu.edu}{hchu@wlu.edu}}}
\address{Department of Mathematics, Washington and Lee University, Lexington, VA 24450, USA}

\author[Z. L. Vasseur]{Zachary Louis Vasseur}
\email{\textcolor{blue}{\href{mailto:zachary.l.v@tamu.edu}{zachary.l.v@tamu.edu}}}
\address{Department of Mathematics, Texas A\&M University, College Station, TX 77840, USA}

\subjclass[2020]{05A19 (primary); 11B37, 11Y55, 05A15 (secondary)}

\keywords{Schreier set, recurrence, Pascal triangle}

\maketitle
 
\begin{abstract}
For $p, q\in \mathbb{N}$, a finite nonempty set $F$ is said to be $(p,q)$-\textit{Schreier} (or \textit{maximal $(p,q)$-Schreier}, respectively) if $q\min F\ge p|F|$ (or $q\min F = p|F|$, respectively). For $n\in \mathbb{N}$, let 
$$\mathcal{S}^{p/q}_{n}\ :=\ |\{F\subset\{1, 2, \ldots, n\}\,:\, q\min F\ge p|F|\mbox{ and }n\in F\}|.$$ 
Using the  Inclusion-Exclusion Principle, Beanland et al.\ proved the recurrence
$$|\mathcal{S}^{p/q}_{n}|\ =\ \sum_{k=1}^q(-1)^{k+1}\binom{q}{k}|\mathcal{S}^{p/q}_{n-k}| + |\mathcal{S}^{p/q}_{n-(p+q)}|.$$
We show that $(|\mathcal{S}^{p/q}_n|)_{n=1}^\infty$ is a subsequence with terms taken periodically from Padovan-like sequences which satisfy simple recurrence relations.
As an application, we obtain an alternative proof of the above linear recurrence. Furthermore, a similar result holds for the sequence $(|\mathcal{M}^{p/q}_{n}|)_{n=1}^\infty$ that counts maximal  $(p,q)$-Schreier sets. We end with a discussion of the relation between $(|\mathcal{S}^{p/q}_{n}|)_{n=1}^\infty$ and $(|\mathcal{M}^{p/q}_{n}|)_{n=1}^\infty$.
\end{abstract}

\tableofcontents

\section{Introduction}
A finite nonempty subset $F$ of natural numbers is called \textit{Schreier} if $\min F\ge |F|$ and is called \textit{maximal Schreier} if $\min F = |F|$. Bird \cite{Bi} observed that for all $n\ge 1$,
\begin{equation}\label{e0}|\{F\subset\mathbb{N}\,:\, F\mbox{ is Schreier and }\max F = n\}| \ =\ F_n,\end{equation}
where $F_n$ is the $n$\textsuperscript{th} Fibonacci number defined as: $F_{-1} = 1, F_0 = 0$ and $F_m = F_{m-1} + F_{m-2}$ for $m\ge 1$. 
Similarly, by \cite[Theorem 1]{C1},
\begin{equation}\label{e1p1}|\{F\subset\mathbb{N}\,:\, F\mbox{ is maximal Schreier and }\max F = n\}|\ =\ F_{n-2}, \mbox{ for all }n\ge 1.\end{equation}

Previous work has generalized these results in various directions: Beanland et al.\ studied the more general condition $q\min F\ge p|F|$ and discovered linear recurrences of higher order; Chu et al.\ connected Schreier multisets to higher-order Fibonacci sequences \cite{CIMSZ}; more recently, Beanland et al.\ counted unions of Schreier sets and provided a family of recursively defined sequences \cite{BGHH}.  

A set $F$ of natural numbers is said to be \textit{$(p,q)$-Schreier} (or \textit{maximal $(p,q)$-Schreier}, respectively) if $q\min F\ge p|F|$ (or $q\min F = p|F|$, respectively).
Let us recall \cite[Theorem 1.1]{BCF}, which states that if
\begin{equation*}\mathcal{S}^{p/q}_n := \{F \subset \mathbb{N}\,:\, F\mbox{ is }(p,q)\mbox{-Schreier and }\max F = n\}, \mbox{ with }p, q, n\in \mathbb{N},\end{equation*}
then 
\begin{equation}\label{ee1}|\mathcal{S}^{p/q}_n|\ =\ \sum_{i=1}^q (-1)^{i+1}\binom{q}{i}|\mathcal{S}^{p/q}_{n-i}| + |\mathcal{S}^{p/q}_{n-(p+q)}|.\end{equation}
Continuing the work of Beanland et al.\ \cite{BCF}, we investigate the family of sequences $(|\mathcal{S}^{p/q}_n|)_{n=1}^\infty$. 
While $(|\mathcal{S}^{1/3}_{n}|)_{n=1}^\infty$ and $(|\mathcal{S}^{1/4}_{n}|)_{n=1}^\infty$ were only recently added by the authors of the present paper to the On-Line Encyclopedia of Integer Sequences (OEIS) \cite{Sl}, these sequences are not far from the existing ones. Our main result shows that each $(|\mathcal{S}^{p/q}_{n}|)_{n=1}^\infty$ is a subsequence of a Padovan-like sequence whose linear recurrence is much simpler than \eqref{ee1}. This relation reveals an alternative proof of \eqref{ee1}, which employs characteristic polynomials instead of the Inclusion-Exclusion Principle. 

We are also interested in maximal $(p,q)$-Schreier sets: let 
\begin{equation*}\mathcal{M}^{p/q}_n := \{F \subset \mathbb{N}\,:\, F\mbox{ is maximal }(p,q)\mbox{-Schreier and }\max F = n\}, \mbox{ with }p, q, n\in \mathbb{N}.\end{equation*}
Tables \ref{skn} and \ref{smkn} collect the initial values of $(|\mathcal{S}^{p/q}_{n}|)_{n=1}^\infty$ and $(|\mathcal{M}^{p/q}_n|)_{n=1}^\infty$ for different $(p,q)$'s.

\begin{table}[H]
\centering
\begin{tabular}{ |c| c| c| c| c| c| c| c| c| c| c| c| c| c| c| c| c| c| c| c| c| }
\hline
$n$ &$1$& $2$ & $3$ & $4$ & $5$ & $6$ & $7$ & $8$ & $9$ & $10$ & $11$ & $12$ & $13$ & $14$ & $15$ \\
\hline
$|\mathcal{S}^{1/1}_{n}|$ & $1$ & $1$ & $2$ & $3$ & $5$ & $8$ & $13$ & $21$ & $34$ & $55$ & $89$ & $144$ & $233$ & $377$ & $610$ \\
\hline
$|\mathcal{S}^{1/2}_{n}|$ & $1$ & $2$ & $3$ & $5$ & $9$ & $16$ & $28$ & $49$ & $86$ & $151$ & $265$ & $465$ & $816$ & $1432$ & $2513$ \\
\hline
$|\mathcal{S}^{1/3}_{n}|$ & $1$ & $2$ & $4$ & $7$ & $12$ & $21$ & $38$ & $70$ & $129$ & $236$ & $429$ & $778$ & $1412$ & $2567$ & $4672$ \\
\hline
$|\mathcal{S}^{1/4}_{n}|$ & $1$ & $2$ & $4$ & $8$ & $15$ & $27$ & $48$ & $86$ & $157$ & $292$ & $549$ & $1034$ & $1939$ & $3613$ & $6697$ \\
\hline
$|\mathcal{S}^{2/1}_{n}|$ & $0$ & $1$ & $1$ & $1$ & $2$ & $3$ & $4$ & $6$ & $9$ & $13$ & $19$ & $28$ & $41$ & $60$ & $88$ \\
\hline
$|\mathcal{S}^{2/3}_{n}|$ & $1$ & $1$ & $2$ & $4$ & $7$ & $12$ & $20$ & $33$ & $55$ & $93$ & $159$ & $273$ & $468$ & $799$ & $1359$ \\
\hline
\end{tabular}
\caption{The first $15$ values of $(|\mathcal{S}^{p/q}_{n}|)_{n=1}^\infty$ with $(p,q) \in \{(1,1), (1,2), (1,3), (1,4), (2,1), (2,3)\}$. The sequences $(|\mathcal{S}^{1/1}_{n}|)_{n=1}^\infty$, $(|\mathcal{S}^{1/2}_{n}|)_{n=1}^\infty$, $(|\mathcal{S}^{2/1}_n|)_{n=1}^\infty$, and $(|\mathcal{S}^{2/3}_{n}|)_{n=1}^\infty$ are \seqnum{A000045}, \seqnum{A005314}, \seqnum{A078012}, and \seqnum{A099558}, respectively; the sequences $(|\mathcal{S}^{1/3}_{n}|)_{n=1}^\infty$ and $(|\mathcal{S}^{1/4}_{n}|)_{n=1}^\infty$  were newly added as \seqnum{A385106} and \seqnum{A385107}, respectively.}
\label{skn}
\end{table}

\begin{table}[H]
\centering
\begin{tabular}{ |c| c| c| c| c| c| c| c| c| c| c| c| c| c| c| c| c| c| c| c| c| }
\hline
$n$ &$1$& $2$ & $3$ & $4$ & $5$ & $6$ & $7$ & $8$ & $9$ & $10$ & $11$ & $12$ & $13$ & $14$ & $15$ \\
\hline
$|\mathcal{M}^{1/1}_n|$ & $1$ & $0$ & $1$ & $1$ & $2$ & $3$ & $5$ & $8$ & $13$ & $21$ & $34$ & $55$ & $89$ & $144$ & $233$  \\
\hline
$|\mathcal{M}^{1/2}_n|$ & $0$ & $1$ & $1$ & $1$ & $2$ & $4$ & $7$ & $12$ & $21$ & $37$ & $65$ & $114$ & $200$ & $351$ & $616$  \\
\hline
$|\mathcal{M}^{1/3}_n|$ & $0$ & $0$ & $1$ & $2$ & $3$ & $4$ & $6$ & $11$ & $22$ & $43$ & $80$ & $144$ & $257$ & $462$ & $839$  \\
\hline
$|\mathcal{M}^{1/4}_n|$ & $0$ & $0$ & $0$ & $1$ & $3$ & $6$ & $10$ & $15$ & $22$ & $35$ & $64$ & $129$ & $265$ & $529$ & $1013$\\
\hline
$|\mathcal{M}^{2/1}_n|$ & $0$ & $1$ & $0$ & $0$ & $1$ & $1$ & $1$ & $2$ & $3$ & $4$ & $6$ & $9$ & $13$ & $19$ & $28$\\
\hline
$|\mathcal{M}^{2/3}_n|$ & $0$ & $0$ & $0$ & $1$ & $2$ & $3$ & $4$ & $5$ & $7$ & $12$ & $23$ & $44$ & $80$ & $138$ & $230$\\
\hline
\end{tabular}
\caption{The first $15$ values of $(|\mathcal{M}^{p/q}_{n}|)_{n=1}^\infty$ with $(p,q) \in \{(1,1), (1,2), (1,3), (1,4), (2,1), (2,3)\}$. These sequences are \seqnum{A212804}, \seqnum{A005251}, \seqnum{A375169}, \seqnum{A385142} (newly added), \seqnum{A078012}, and \seqnum{A137357}, respectively.}
\label{smkn}
\end{table}

A keen observation reveals that the terms of $(|\mathcal{S}^{p/q}_{n}|)_{n=1}^\infty$ and $(|\mathcal{M}^{p/q}_{n}|)_{n=1}^\infty$ are periodically taken from certain Padovan-like sequences. For $(p,q)\in \mathbb{N}^2$ with $p\ge q$, define the sequence $(a_{p,q,n})_{n=0}^\infty$ (Table \ref{an}) as follows:
\begin{align*}
    a_{p,q,0} &\ =\ \cdots\ =\ a_{p, q, p-q-1} \ =\ 0,\\
    a_{p,q, p-q}&\ =\ \cdots\ =\ a_{p,q, p+q-1}\ =\ 1,\mbox{ and }\\
    a_{p,q, n}&\ =\ a_{p,q,n-q} + a_{p,q,n-p-q},\mbox{ for }n\ge p+q.
\end{align*}
If $p < q$, let 
\begin{align*}
    a_{p,q,0} &\ =\ \cdots\ =\ a_{p, q, 2p-1} \ =\ 1,\\
    a_{p,q, 2p}&\ =\ \cdots\ =\ a_{p,q, p+q-1}\ =\ 2,\mbox{ and }\\
    a_{p,q, n}&\ =\ a_{p,q,n-q} + a_{p,q,n-p-q},\mbox{ for }n\ge p+q.
\end{align*}

By changing the initial condition of the Padovan-like sequences $(a_{p, q, n})_{n=0}^\infty$, we arrive at a similar result for maximal $(p,q)$-Schreier sets. For $(p, q)\in \mathbb{N}^2$ and $p\ge q$, define the sequence $(a^{(m)}_{p,q, n})_{n=0}^\infty$ (Table \ref{am}) recursively as follows:
\begin{align*}
    a^{(m)}_{p,q,0}&\ =\ \cdots \ =\ a^{(m)}_{p,q,p-q-1}\ =\ 0, \quad a^{(m)}_{p,q,p-q}\ =\ 1,\\
    a^{(m)}_{p,q,p-q+1}&\ =\ \cdots\ =\ a^{(m)}_{p,q,p+q-1}\ =\ 0,\mbox{ and}\\
    a^{(m)}_{p, q, n}&\ =\ a^{(m)}_{p, q, n-q} + a^{(m)}_{p,q, n-p-q}. 
\end{align*}
If $p < q$, then
\begin{align*}
    a^{(m)}_{p,q,0}&\ =\ \cdots \ =\ a^{(m)}_{p,q,2p-1}\ =\ 0, \quad a^{(m)}_{p,q,2p}\ =\ 1,\\
    a^{(m)}_{p,q,2p+1}&\ =\ \cdots\ =\ a^{(m)}_{p,q,p+q-1}\ =\ 0,\mbox{ and}\\
    a^{(m)}_{p, q, n}&\ =\ a^{(m)}_{p, q, n-q} + a^{(m)}_{p,q, n-p-q}. 
\end{align*}

As we see later, for all $(p, q)\in \mathbb{N}^2$ and $n\ge 0$, 
\begin{equation}\label{eee13}
a_{p, q, n}\ =\ \sum_{i=0}^{\left\lfloor\frac{n-p+q}{p+q}\right\rfloor}\binom{\left\lfloor \frac{n-p(i+1)}{q}\right\rfloor+1}{i} \ =:\ \Psi(p, q, n).
\end{equation}
and
\begin{equation}\label{eee14}
a^{(m)}_{p,q,n}\ =\ \sum_{\substack{i=0\\ qi\equiv n\mod p}}^{\left\lfloor\frac{n}{q}\right\rfloor}\binom{i}{\frac{n-iq}{p}-2} + \begin{cases}1, &\mbox{ if }n = p-q;\\ 0, &\mbox{ otherwise}\end{cases}\ =:\ \Phi(p, q, n).
\end{equation}

Tables \ref{an} and \ref{am} record the first few values of $(a_{p,q,n})_{n=0}^\infty$ and $(a^{(m)}_{p,q,n})_{n=0}^\infty$ for different $(p,q)$'s.

\begin{table}[H]
\centering
\begin{tabular}{ |c| c| c| c| c| c| c| c| c| c| c| c| c| c| c| c| c| c| c| c| c| c|c|}
\hline
$n$ &$0$& $1$ & $2$ & $3$ & $4$ & $5$ & $6$ & $7$ & $8$ & $9$ & $10$ & $11$ & $12$ & $13$ & $14$ & $15$ & $16$ \\
\hline
$a_{1,1,n}$ & $1$ & $1$ & $2$ & $3$ & $5$ & $8$ & $13$ & $21$ & $34$ & $55$ & $89$ & $144$ & $233$ & $377$ & $610$ & $987$ & $1597$ \\
\hline
$a_{1,2,n}$ & $1$ & $1$ & $2$ & $2$ & $3$ & $4$ & $5$ & $7$ & $9$ & $12$ & $16$ & $21$ & $28$ & $37$ & $49$ & $65$ & $86$ \\
\hline
$a_{1,3,n}$ & $1$ & $1$ & $2$ & $2$ & $2$ & $3$ & $4$ & $4$ & $5$ & $7$ & $8$ & $9$ & $12$ & $15$ & $17$ & $21$ & $27$  \\
\hline
$a_{1,4,n}$ & $1$ & $1$ & $2$ & $2$ & $2$ & $2$ & $3$ & $4$ & $4$ & $4$ & $5$ & $7$ & $8$ & $8$ & $9$ & $12$ & $15$  \\
\hline
$a_{2,1,n}$ & $0$ & $1$ & $1$ & $1$ & $2$ & $3$ & $4$ & $6$ & $9$ & $13$ & $19$ & $28$ & $41$ & $60$ & $88$ & $129$ & $189$  \\
\hline
$a_{2,3,n}$ & $1$ & $1$ & $1$ & $1$ & $2$ & $2$ & $2$ & $3$ & $3$ & $4$ & $5$ & $5$ & $7$ & $8$ & $9$ & $12$ & $13$  \\
\hline
\end{tabular}
\caption{The first $17$ values of $(a_{p,q, n})_{n=0}^\infty$ with $(p,q) \in \{(1,1), (1,2), (1,3), (1,4), (2,1), (2,3)\}$. Note that for $n\ge 0$, $a_{1,1,n} = \seqnum{A000045}(n+1)$; $a_{1,2,n} = \seqnum{A000931}(n+6)$; $a_{1,3,n}\ =\ \seqnum{A079398}(n+3)$; $a_{1,4,n}\ =\ \seqnum{A103372}(n+4)$; $a_{2,1,n} \ =\ \seqnum{A078012}(n+2)$; $a_{2, 3, n} \ =\ \seqnum{A226503}(n+2)$.}
\label{an}
\end{table}

\begin{table}[H]
\centering
\begin{tabular}{ |c| c| c| c| c| c| c| c| c| c| c| c| c| c| c| c| c| c| c| c| c| c|c|c|}
\hline
$n$ &$0$& $1$ & $2$ & $3$ & $4$ & $5$ & $6$ & $7$ & $8$ & $9$ & $10$ & $11$ & $12$ & $13$ & $14$ & $15$ & $16$ & $17$ \\
\hline
$a^{(m)}_{1,1,n}$ & $1$ & $0$ & $1$ & $1$ & $2$ & $3$ & $5$ & $8$ & $13$ & $21$ & $34$ & $55$ & $89$ & $144$ & $233$ & $377$ & $610$ & $987$ \\
\hline
$a^{(m)}_{1,2,n}$ & $0$ & $0$ & $1$ & $0$ & $1$ & $1$ & $1$ & $2$ & $2$ & $3$ & $4$ & $5$ & $7$ & $9$ & $12$ & $16$ & $21$ & $28$ \\
\hline
$a^{(m)}_{1,3,n}$ & $0$ & $0$ & $1$ & $0$ & $0$ & $1$ & $1$ & $0$ & $1$ & $2$ & $1$ & $1$ & $3$ & $3$ & $2$ & $4$ & $6$ & $5$\\
\hline
$a^{(m)}_{1,4,n}$ & $0$ & $0$ & $1$ & $0$ & $0$ & $0$ & $1$ & $1$ & $0$ & $0$ & $1$ & $2$ & $1$ & $0$ & $1$ & $3$ & $3$ & $1$\\
\hline
$a^{(m)}_{2,1,n}$ & $0$ & $1$ & $0$ & $0$ & $1$ & $1$ & $1$ & $2$ & $3$ & $4$ & $6$ & $9$ & $13$ & $19$ & $28$ & $41$ & $60$ & $88$\\
\hline
$a^{(m)}_{2,3,n}$ & $0$ & $0$ & $0$ & $0$ & $1$ & $0$ & $0$ & $1$ & $0$ & $1$ & $1$ & $0$ & $2$ & $1$ & $1$ & $3$ & $1$ & $3$\\
\hline
\end{tabular}
\caption{The first $18$ values of $(a^{(m)}_{p,q, n})_{n=1}^\infty$ with $(p,q) \in \{(1,1), (1,2), (1,3), (1,4), (2,1), (2,3)\}$. For $n\ge 0$, $a^{(m)}_{1,1,n}\ =\ \seqnum{A212804}(n)$; $a^{(m)}_{1,2, n} = \seqnum{A000931}(n+1)$; $a^{(m)}_{1,3,n+2} = \seqnum{A017817}(n)$; $a^{(m)}_{1,4,n+2} = \seqnum{A017827}(n)$; $a^{(m)}_{2,1,n} = \seqnum{A135851}(n+1)$; $a^{(m)}_{2, 3, n+4} = \seqnum{A052920}(n)$.}
\label{am}
\end{table}

\begin{thm}\label{rc1}
For $(p, q, n)\in \mathbb{N}^3$,  it holds that 
\begin{equation}
|\mathcal{S}^{p/q}_n|\ =\ a_{p,q,(n-1)q}\label{eee18}
\end{equation}
and
\begin{equation}
|\mathcal{M}^{p/q}_{n}|\ =\ a^{(m)}_{p,q, (n-1)q}.\label{eee19}
\end{equation}
\end{thm}

\begin{cor}\label{cc1}
For $(p,q)\in \mathbb{N}^2$, the first $(p+q)$ terms of the sequence $(|\mathcal{S}^{p/q}_n|)_{n=1}^\infty$ are 
$$|\mathcal{S}^{p/q}_n|\ =\ \sum_{i=1}^{\left\lfloor\frac{(n+1)q}{p+q}\right\rfloor}\binom{n-\left\lceil \frac{ni}{q}\right\rceil}{i-1}, \mbox{ for }1\le n\le p+q.$$
Later terms satisfy the recurrence
$$|\mathcal{S}^{p/q}_n| \ =\ \sum_{i=1}^q (-1)^{i+1}\binom{q}{i}|\mathcal{S}^{p/q}_{n-i}| + |\mathcal{S}^{p/q}_{n-(p+q)}|,\mbox{ for }n\ge p+q+1.$$
\end{cor}

\begin{cor}\label{cc2}
For $(p,q)\in \mathbb{N}^2$, the first $(p+q)$ terms of the sequence $(|\mathcal{M}^{p/q}_n|)_{n=1}^\infty$ are 
$$|\mathcal{M}^{p/q}_n|\ =\ \sum_{k=1}^{\left\lfloor\frac{n+1}{p'+q'}\right\rfloor}\binom{n-p'k-1}{q'k-2},$$
where $p' = p/\gcd(p,q)$ and $q' = q/\gcd(p,q)$.
Later terms satisfy the recurrence 
$$|\mathcal{M}^{p/q}_{n}|\ =\ \sum_{i=1}^q (-1)^{i+1}\binom{q}{i}|\mathcal{M}^{p/q}_{n-i}| + |\mathcal{M}^{p/q}_{n-(p+q)}|,\mbox{ for }n\ge p+q+1.$$
\end{cor}

Our paper is structured as follows: Section \ref{formulas} establishes Formulas \eqref{eee13} and \eqref{eee14} for the sequences $(a_{p, q, n})_{n=0}^\infty$ and $(a^{(m)}_{p,q,n})_{n=0}^\infty$; Section \ref{subseq} proves Theorem \ref{rc1}, which states that $(|\mathcal{S}^{p/q}_n|)_{n=1}^\infty$ and $(|\mathcal{M}^{p/q}_n|)_{n=1}^\infty$ are subsequences taken $q$-periodically out of  $(a_{p,q,n})_{n=0}^\infty$ and $(a^{(m)}_{p,q,n})_{n=0}^\infty$, respectively. This enables us to obtain the recurrence of $(|\mathcal{S}^{p/q}_n|)_{n=1}^\infty$ and $(|\mathcal{M}^{p/q}_n|)_{n=1}^\infty$ using characteristic polynomials; Section \ref{relate} investigates the relation between $(|\mathcal{S}^{p/q}_{n}|)_{n=1}^\infty$ and $(|\mathcal{M}^{p/q}_{n}|)_{n=1}^\infty$ when either $p$ or $q$ is equal to $1$ and suggests some problems for future studies. 
\section{Formulas for $(a_{p, q, n})_{n=0}^\infty$ and $(a^{(m)}_{p,q,n})_{n=0}^\infty$}\label{formulas}
In this section, we prove Formulas \eqref{eee13} and \eqref{eee14} by first confirming that these formulas produce the initial values of $(a_{p, q, n})_{n=0}^\infty$ and $(a^{(m)}_{p,q,n})_{n=0}^\infty$, respectively then showing that later values satisfy the recurrence given by the characteristic polynomial $1-x^q-x^{p+q}$.

\begin{proof}[Proof of \eqref{eee13}]
First, we verify that $\Psi(p, q, n) = a_{p, q,n}$ for $0\le n\le p+q-1$. 

    Case 1: $p\ge q$. When $n\le p-q-1$, the sum 
    $$\sum_{i=0}^{\left\lfloor\frac{n-p+q}{p+q}\right\rfloor}\binom{\left\lfloor \frac{n-p(i+1)}{q}\right\rfloor+1}{i}$$
    is empty and thus, is equal to $0$. When $p-q\le n \le p+q-1$, 
        $$\sum_{i=0}^{\left\lfloor\frac{n-p+q}{p+q}\right\rfloor}\binom{\left\lfloor \frac{n-p(i+1)}{q}\right\rfloor+1}{i}\ =\ \sum_{i=0}^0\binom{\left\lfloor \frac{n-p(i+1)}{q}\right\rfloor+1}{i}\ = \ \binom{\left\lfloor \frac{n-p}{q}\right\rfloor+1}{0} \ =\ 1.$$

    Case 2: $p < q$. When $n\le 2p-1$, 
    $$\sum_{i=0}^{\left\lfloor\frac{n-p+q}{p+q}\right\rfloor}\binom{\left\lfloor \frac{n-p(i+1)}{q}\right\rfloor+1}{i}\ =\ \sum_{i=0}^{0}\binom{\left\lfloor \frac{n-p(i+1)}{q}\right\rfloor+1}{i}\ =\ \binom{\left\lfloor\frac{n-p}{q}\right\rfloor+1}{0}\ =\ 1;$$
    when $2p\le n\le p+q-1$,
    \begin{align*}\sum_{i=0}^{\left\lfloor\frac{n-p+q}{p+q}\right\rfloor}\binom{\left\lfloor \frac{n-p(i+1)}{q}\right\rfloor+1}{i}&\ =\ \sum_{i=0}^1 \binom{\left\lfloor \frac{n-p(i+1)}{q}\right\rfloor+1}{i}\\
    &\ =\ \binom{\left\lfloor\frac{n-p}{q}\right\rfloor+1}{0} + \binom{\left\lfloor\frac{n-2p}
    {q}\right\rfloor+1}{1}\\
    &\ =\ 1 + 1 \ =\ 2.
    \end{align*}

    It remains to prove that $(\Psi(p, q, n))_{n=1}^\infty$ and $(a_{p, q, n})_{n=1}^\infty$  satisfy the same recurrence: for all $n\ge p+q$,
$$\sum_{i=0}^{\left\lfloor\frac{n-p+q}{p+q}\right\rfloor}\binom{\left\lfloor \frac{n-p(i+1)}{q}\right\rfloor+1}{i}\ =\ \sum_{i=0}^{\left\lfloor\frac{n-p}{p+q}\right\rfloor}\binom{\left\lfloor \frac{n-p(i+1)}{q}\right\rfloor}{i} + \sum_{i=0}^{\left\lfloor\frac{n-2p}{p+q}\right\rfloor}\binom{\left\lfloor \frac{n-p(i+2)}{q}\right\rfloor}{i}.$$
We have
\begin{align}\label{re1}
    &\sum_{i=0}^{\left\lfloor\frac{n-p+q}{p+q}\right\rfloor}\binom{\left\lfloor \frac{n-p(i+1)}{q}\right\rfloor+1}{i}- \sum_{i=0}^{\left\lfloor\frac{n-p}{p+q}\right\rfloor}\binom{\left\lfloor \frac{n-p(i+1)}{q}\right\rfloor}{i}\nonumber\\
    \ =\ & \sum_{i=1}^{\left\lfloor\frac{n-p+q}{p+q}\right\rfloor}\binom{\left\lfloor \frac{n-p(i+1)}{q}\right\rfloor+1}{i}- \sum_{i=1}^{\left\lfloor\frac{n-p}{p+q}\right\rfloor}\binom{\left\lfloor \frac{n-p(i+1)}{q}\right\rfloor}{i}\nonumber\\
    \ =\ & \sum_{i=\left\lfloor\frac{n-p}{p+q}\right\rfloor+1}^{\left\lfloor\frac{n-p+q}{p+q}\right\rfloor} \binom{\left\lfloor \frac{n-p(i+1)}{q}\right\rfloor+1}{i}+ \sum_{i=1}^{\left\lfloor\frac{n-p}{p+q}\right\rfloor}\left(\binom{\left\lfloor \frac{n-p(i+1)}{q}\right\rfloor+1}{i}-\binom{\left\lfloor \frac{n-p(i+1)}{q}\right\rfloor}{i}\right)\nonumber\\
    \ =\ & \sum_{i=\left\lfloor\frac{n+q}{p+q}\right\rfloor}^{\left\lfloor\frac{n-p+q}{p+q}\right\rfloor} \binom{\left\lfloor \frac{n-p(i+1)}{q}\right\rfloor+1}{i}+ \sum_{i=1}^{\left\lfloor\frac{n-p}{p+q}\right\rfloor}\binom{\left\lfloor \frac{n-p(i+1)}{q}\right\rfloor}{i-1}\nonumber\\
    \ =\ & \sum_{i=\left\lfloor\frac{n+q}{p+q}\right\rfloor}^{\left\lfloor\frac{n-p+q}{p+q}\right\rfloor} \binom{\left\lfloor \frac{n-p(i+1)}{q}\right\rfloor+1}{i}+ \sum_{i=0}^{\left\lfloor\frac{n-2p-q}{p+q}\right\rfloor}\binom{\left\lfloor \frac{n-p(i+2)}{q}\right\rfloor}{i}.
\end{align}

Write $n - p= (p+q)j + k$ for some $j\ge 0$ and $0\le k\le p+q-1$ and proceed by case analysis.

If $0\le k\le p-1$, then \eqref{re1} is equal to
\begin{align*}\sum_{i=j+1}^{j} \binom{\left\lfloor \frac{n-p(i+1)}{q}\right\rfloor+1}{i}+ \sum_{i=0}^{j-1}\binom{\left\lfloor \frac{n-p(i+2)}{q}\right\rfloor}{i}&\ =\ \sum_{i=0}^{j-1}\binom{\left\lfloor \frac{n-p(i+2)}{q}\right\rfloor}{i}\\
&\ =\ \sum_{i=0}^{\left\lfloor\frac{n-2p}{p+q}\right\rfloor}\binom{\left\lfloor \frac{n-p(i+2)}{q}\right\rfloor}{i}.
\end{align*}

If $p\le k\le p+q-1$, then \eqref{re1} is equal to
\begin{align*}
&\sum_{i=j+1}^{j+1} \binom{\left\lfloor \frac{n-p(i+1)}{q}\right\rfloor+1}{i}+ \sum_{i=0}^{j-1}\binom{\left\lfloor \frac{n-p(i+2)}{q}\right\rfloor}{i}\\
\ =\ &1 + \sum_{i=0}^{j-1}\binom{\left\lfloor \frac{n-p(i+2)}{q}\right\rfloor}{i} \\
\ =\ &\sum_{i=0}^{j}\binom{\left\lfloor \frac{n-p(i+2)}{q}\right\rfloor}{i}\ =\ \sum_{i=0}^{\left\lfloor\frac{n-2p}{p+q}\right\rfloor}\binom{\left\lfloor \frac{n-p(i+2)}{q}\right\rfloor}{i}.
\end{align*}

We have shown that for $n\ge p+q$,
$$\sum_{i=0}^{\left\lfloor\frac{n-p+q}{p+q}\right\rfloor}\binom{\left\lfloor \frac{n-p(i+1)}{q}\right\rfloor+1}{i}- \sum_{i=0}^{\left\lfloor\frac{n-p}{p+q}\right\rfloor}\binom{\left\lfloor \frac{n-p(i+1)}{q}\right\rfloor}{i}\ =\  \sum_{i=0}^{\left\lfloor\frac{n-2p}{p+q}\right\rfloor}\binom{\left\lfloor \frac{n-p(i+2)}{q}\right\rfloor}{i}.$$
\end{proof}

\begin{proof}[Proof of \eqref{eee14}]
First, we verify that $\Phi(p, q, n) = a_{p, q,n}$ for $0\le n\le p+q-1$. 

    Case 1: $p\ge q$. For $n\le p+q-1$, we have 
    $$\frac{n-iq}{p}-2\ \le\ \frac{p+q-1-iq}{p}-2\ \le\  \frac{q-1}{p}-1 \ <\ 0, \mbox{ for all }i\ge 0.$$
    Hence, 
    $$\sum_{\substack{i=0\\ qi\equiv n\mod p}}^{\left\lfloor\frac{n}{q}\right\rfloor}\binom{i}{\frac{n-iq}{p}-2} + \begin{cases}1, &\mbox{ if }n = p-q;\\ 0, &\mbox{ otherwise}\end{cases}\ =\ \begin{cases}1, &\mbox{ if }n = p-q;\\ 0, &\mbox{ if }0\le n \le p+q-1\mbox{ and }n\neq p-q. \end{cases}$$

    Case 2: $p < q$. For $n\le 2p-1$, we have
    $$\frac{n-iq}{p}-2\ \le\ \frac{2p-1}{p}-2\ <\ 0,\mbox{ for all }i\ge 0;$$
    hence
    $$\sum_{\substack{i=0\\ qi\equiv n\mod p}}^{\left\lfloor\frac{n}{q}\right\rfloor}\binom{i}{\frac{n-iq}{p}-2}\ =\ 0.$$
    For $n = 2p$,
    $$\sum_{\substack{i=0\\ qi\equiv n\mod p}}^{\left\lfloor\frac{n}{q}\right\rfloor}\binom{i}{\frac{n-iq}{p}-2}\ =\ \sum_{\substack{i=0\\ p| (qi)}}^{\left\lfloor\frac{2p}{q}\right\rfloor}\binom{i}{\frac{-iq}{p}}\ =\ \binom{0}{0}\ =\ 1$$
    because every positive $i$ makes $\binom{i}{-iq/p} = 0$.
    For $2p+1\le n\le p+q-1$, we have
    $$\frac{n-iq}{p}-2\ \le\ \frac{p+q-1-iq}{p}-2\ =\ \frac{(1-i)q-1}{p}-1\ <\ 0,\mbox{ for all }i\ge 1.$$
    It follows that
    $$\sum_{\substack{i=0\\ qi\equiv n\mod p}}^{\left\lfloor\frac{n}{q}\right\rfloor}\binom{i}{\frac{n-iq}{p}-2}\ =\ \begin{cases} \binom{0}{\frac{n}{p}-2}, &\mbox{ if }p|n\\ 0,&\mbox{ otherwise}\end{cases}\ =\ 0$$
    because if $p|n$, then $n/p\ge 3$.

    Next, we show that for $n\ge p+q$,
    \begin{equation}\label{eee2}\Phi(p, q, n)\ =\ \Phi(p, q, n-q) + \Phi(p, q, n-p-q),\end{equation}
    which is the same recurrence relation as $(a^{(m)}_{p, q, n})_{n=0}^\infty$.
    To do so, we consider the two cases when $n = 2p$ and $n\neq 2p$.

    Case A: $n = 2p$. We have
    $$\Phi(p, q, n-p-q)\ =\ \Phi(p, q, p-q)\ = \ \sum_{\substack{i=0\\ p|((i+1)q)}}^{\left\lfloor\frac{p}{q}\right\rfloor-1}\binom{i}{\frac{-(i+1)q}{p}-1} + 1\ =\ 1$$
    because $-(i+1)q/p-1 < 0$ for all $i\ge 0$.
    Then $$\Phi(p, q, 2p)\ =\ \Phi(p, q, 2p-q) + \Phi(p, q, p-q)$$ can be rewritten as
    \begin{equation}\label{eee3}\sum_{\substack{i=0\\ p|(qi)}}^{\left\lfloor\frac{2p}{q}\right\rfloor}\binom{i}{\frac{-iq}{p}}\ =\ \sum_{\substack{i=0\\ p|((i+1)q)}}^{\left\lfloor\frac{2p}{q}\right\rfloor-1}\binom{i}{\frac{-(i+1)q}{p}}+1.\end{equation}
    On the left side of \eqref{eee3}, only $i = 0$ contributes a nonzero term to the sum, so $$\sum_{\substack{i=0\\ p|(qi)}}^{\left\lfloor\frac{2p}{q}\right\rfloor}\binom{i}{\frac{-iq}{p}} \ =\ 1.$$
    On the right side of \eqref{eee3}, each binomial in the sum is zero because $i \ge 0 >  -(i+1)q/p$. This confirms \eqref{eee2} when $n = 2p$.

    Case B: $n\neq 2p$. We write \eqref{eee2} as 
    \begin{align}\label{eee4}&\sum_{\substack{i=0\\ qi\equiv n\mod p}}^{\left\lfloor\frac{n}{q}\right\rfloor}\binom{i}{\frac{n-iq}{p}-2}\nonumber\\
    \ =\ &\sum_{\substack{i=0\\ q(i+1)\equiv n\mod p}}^{\left\lfloor\frac{n}{q}\right\rfloor-1}\binom{i}{\frac{n-(i+1)q}{p}-2} + \sum_{\substack{i=0\\ q(i+1)\equiv n\mod p}}^{\left\lfloor\frac{n-p}{q}\right\rfloor-1}\binom{i}{\frac{n-(i+1)q}{p}-3}.\end{align}
    The right side of \eqref{eee4} is 
    \begin{align}\label{eee5}
        &\sum_{\substack{i=\left\lfloor\frac{n-p}{q}\right\rfloor\\ q(i+1)\equiv n\mod p}}^{\left\lfloor\frac{n}{q}\right\rfloor-1}\binom{i}{\frac{n-(i+1)q}{p}-2} + \sum_{\substack{i=0\\ q(i+1)\equiv n\mod p}}^{\left\lfloor\frac{n-p}{q}\right\rfloor-1}\left(\binom{i}{\frac{n-(i+1)q}{p}-2}+\binom{i}{\frac{n-(i+1)q}{p}-3}\right)\nonumber\\
        \ =\ &\sum_{\substack{i=\left\lfloor\frac{n-p}{q}\right\rfloor \\ q(i+1)\equiv n\mod p}}^{\left\lfloor\frac{n}{q}\right\rfloor-1}\binom{i}{\frac{n-(i+1)q}{p}-2} + \sum_{\substack{i=0\\ q(i+1)\equiv n\mod p}}^{\left\lfloor\frac{n-p}{q}\right\rfloor-1}\binom{i+1}{\frac{n-(i+1)q}{p}-2}\nonumber\\
        \ =\ &\sum_{\substack{i=\left\lfloor\frac{n-p}{q}\right\rfloor \\ q(i+1)\equiv n\mod p}}^{\left\lfloor\frac{n}{q}\right\rfloor-1}\binom{i}{\frac{n-(i+1)q}{p}-2} + \sum_{\substack{i=1\\ qi\equiv n\mod p}}^{\left\lfloor\frac{n-p}{q}\right\rfloor}\binom{i}{\frac{n-iq}{p}-2}.
    \end{align}
    It follows from \eqref{eee4} and \eqref{eee5} that we need to prove
    \begin{equation}\label{eee6}
        \sum_{\substack{i=\left\lfloor\frac{n-p}{q}\right\rfloor+1\\ qi\equiv n\mod p}}^{\left\lfloor\frac{n}{q}\right\rfloor}\binom{i}{\frac{n-iq}{p}-2}\ =\       \sum_{\substack{i=\left\lfloor\frac{n-p}{q}\right\rfloor\\ q(i+1)\equiv n\mod p}}^{\left\lfloor\frac{n}{q}\right\rfloor-1}\binom{i}{\frac{n-(i+1)q}{p}-2}.
    \end{equation}
    Note that we discard $\binom{0}{n/p-2}$ on the left side of \eqref{eee4} because the term is nonzero only when $n = 2p$. We claim that both sides of \eqref{eee6} vanish. On the left, we have
    $$\frac{n-p}{q}\ <\ i\ \le\ \frac{n}{q}\quad\mbox{ and }\quad qi\equiv n\mod p;$$
    hence,
    $$-p\ <\ qi-n \ \le\ 0\quad\mbox{ and }\quad qi\equiv n\mod p.$$
    It follows that $qi -n = 0$, and the corresponding binomial is $\binom{i}{-2} = 0$ because $i\ge 1$. Similarly, the right side of \eqref{eee6} is also $0$.
\end{proof}

\section{Subsequences of Padovan-like sequences}\label{subseq}

\subsection{Proof of Theorem \ref{rc1}}
In this section, we show that the terms of $(|\mathcal{S}^{p/q}_{n}|)_{n=1}^\infty$ and $(|\mathcal{M}^{p/q}_{n}|)_{n=1}^\infty$ are taken $q$-periodically from the sequences $(a_{p,q,n})_{n=0}^\infty$ and $(a^{(m)}_{p,q,n})_{n=0}^\infty$. This allows us to deduce a linear recurrence for $(|\mathcal{S}^{p/q}_{n}|)_{n=1}^\infty$ and $(|\mathcal{M}^{p/q}_{n}|)_{n=1}^\infty$.

First, we find a formula for $|\mathcal{S}^{p/q}_n|$. For $1\le i\le n$, let $F$ be an $i$-element set in $\mathcal{S}^{p/q}_n$, i.e., $F\subset \{1, 2, \ldots, n\}$, $n\in F$, $|F| = i$, and $q\min F\ge pi$. Hence, to form $F$, we choose $i-1$ elements in 
$\left\{\left\lceil \frac{pi}{q}\right\rceil, \ldots, n-1\right\}$. It follows that the number of $i$-element sets in $|\mathcal{S}^{p/q}_n|$ is $$\binom{n-\left\lceil \frac{pi}{q}\right\rceil}{i-1}.$$
Here we require that $n - \left\lceil pi/q\right\rceil\ge i-1$, or equivalently, $i\le \left\lfloor\frac{(n+1)q}{p+q}\right\rfloor$. Therefore, we have
\begin{equation}\label{eee20'}|\mathcal{S}^{p/q}_n|\ =\ \sum_{i=1}^{\left\lfloor\frac{(n+1)q}{p+q}\right\rfloor}\binom{n-\left\lceil \frac{pi}{q}\right\rceil}{i-1}.\end{equation}

Next, we give a formula for $|\mathcal{M}^{p/q}_n|$. 
Let $p' = p/\gcd(p,q)$ and $q' = q/\gcd(p,q)$. Consider a $(p,q)$-maximal Schreier set $F\subset \{1, \ldots, n\}$ with $n\in F$ and $|F| = i$. Since $q\min F = p|F| = pi$, $i$ is a multiple of $q'$. Let $i = q'k$. Then $\min F = p'k$, and $F = \{p'k, n\}\cup F'$, where $F'\subset \{p'k + 1, p'k + 2, \ldots, n-1\}$ and $|F'| = q'k-2$. Hence, there are $\binom{n-p'k-1}{q'k-2}$ such sets $F$. Here we require $n - p'k-1\ge q'k-2$, i.e., $k\le (n+1)/(p'+q')$. Therefore,
\begin{equation}\label{eee20}|\mathcal{M}^{p/q}_{n}|\ =\ \sum_{k=1}^{\left\lfloor\frac{n+1}{p'+q'}\right\rfloor}\binom{n-p'k-1}{q'k-2}.\end{equation}

\begin{proof}[Proof of Theorem \ref{rc1}]
First, we prove \eqref{eee18}. We have 
\begin{align*}
\Psi(p, q, (n-1)q)&\ =\ \sum_{i=0}^{\left\lfloor\frac{(n-1)q-p+q}{p+q}\right\rfloor}\binom{\left\lfloor \frac{(n-1)q-p(i+1)}{q}\right\rfloor+1}{i}\\
&\ =\ \sum_{i=0}^{\left\lfloor\frac{nq-p}{p+q}\right\rfloor}\binom{n + \left\lfloor\frac{-p(i+1)}{q}\right\rfloor}{i}\\
&\ =\ \sum_{i=0}^{\left\lfloor\frac{nq-p}{p+q}\right\rfloor}\binom{n - \left\lceil\frac{p(i+1)}{q}\right\rceil}{i}\\
&\ =\ \sum_{i=1}^{\left\lfloor\frac{(n+1)q}{p+q}\right\rfloor}\binom{n - \left\lceil\frac{pi}{q}\right\rceil}{i-1}\ =\ |\mathcal{S}^{p/q}_n|.
\end{align*}

Next, we prove \eqref{eee19}. Let $p' = p/\gcd(p,q)$ and $q' = q/\gcd(p,q)$.
We have
\begin{align}\label{eee10}&\Phi(p, q, (n-1)q)\nonumber\\
\ =\ &\sum_{\substack{i=0\\ p'|(n-i-1)}}^{n-1}\binom{i}{\frac{q'(n-i-1)}{p'}-2} + \begin{cases}1, &\mbox{ if } (n,q') = (p',1),\\ 0, &\mbox{ otherwise,}\end{cases}\nonumber\\
    \ =\ &\sum_{k=0}^{\left\lfloor\frac{n-1}{p'}\right\rfloor}\binom{n-p'k-1}{q'k-2} +\begin{cases}1, &\mbox{ if } (n,q') = (p',1),\\ 0, &\mbox{ otherwise,}\end{cases} \quad \mbox{(with } n-i-1 = p'k\mbox{)}  \nonumber\\
    \ =\ &\sum_{k=1}^{\left\lfloor\frac{n-1}{p'}\right\rfloor}\binom{n-p'k-1}{q'k-2} + \begin{cases}1, &\mbox{ if } (n,q') = (p',1),\\ 0, &\mbox{ otherwise.}\end{cases}
\end{align}
Thanks to \eqref{eee20} and \eqref{eee10}, it suffices to verify that for $n\in \mathbb{N}$,
\begin{equation}\label{eee12}\sum_{k=1}^{\left\lfloor\frac{n+1}{p'+q'}\right\rfloor}\binom{n-p'k-1}{q'k-2}\ =\ \sum_{k=1}^{\left\lfloor\frac{n-1}{p'}\right\rfloor}\binom{n-p'k-1}{q'k-2} + \begin{cases}1, &\mbox{ if } (n,q') = (p',1),\\ 0, &\mbox{ otherwise.}\end{cases}\end{equation}
We proceed by case analysis.

Case 1: $(n-1)/p'\ge (n+1)/(p'+q')$, i.e., $nq' \ge 2p' + q'$. Then $(n,q')\neq (p',1)$. We have
$$\sum_{k=1}^{\left\lfloor\frac{n-1}{p'}\right\rfloor}\binom{n-p'k-1}{q'k-2} - \sum_{k=1}^{\left\lfloor\frac{n+1}{p'+q'}\right\rfloor}\binom{n-p'k-1}{q'k-2}\ =\ \sum_{k=\left\lfloor\frac{n+1}{p'+q'}\right\rfloor+1}^{\left\lfloor\frac{n-1}{p'}\right\rfloor}\binom{n-p'k-1}{q'k-2}.$$
For each $k\ge \left\lfloor \frac{n+1}{p'+q'}\right\rfloor+1$, $n-p'k-1 < q'k - 2$, so the binomial $\binom{n-p'k-1}{q'k-2}$ is zero unless $q'k = 2$, which occurs when either $(q',k) = (2,1)$ or $(q',k) = (1,2)$.

\begin{enumerate}
\item[a)] Case 1.1: $(q',k) = (2,1)$. It follows from $k = 1$ that $\left\lfloor \frac{n+1}{p'+q'}\right\rfloor = 0$, which implies $n < p' + 1$. However, $nq' \ge 2p' + q'$ gives $2n \ge 2p' + 2$, which contradicts $n < p'+1$. 

\item[b)] Case 1.2: $(q',k) = (1,2)$. Then  $\left\lfloor \frac{n+1}{p'+q'}\right\rfloor = 1$, which implies 
$$n\ <\ 2p'+2q'-1\ =\ 2p'+1.$$
However, $nq'\ge 2p'+q'$ gives $n\ge 2p'+1$, which contradicts $n < 2p'+1$. 
\end{enumerate}

Case 2: $(n-1)/p' < (n+1)/(p'+q')$, i.e., $n < 1 + 2p'/q'$. Then $(n+1)/(p'+q') < 2$.

If $\left\lfloor\frac{n+1}{p'+q'}\right\rfloor = 0$, then $(n,q')\neq (p',1)$, and $(n-1)/p' < (n+1)/(p'+q')$ implies that $\left\lfloor\frac{n-1}{p'}\right\rfloor = 0$. Hence, both sides of \eqref{eee12} are zero. 

If $\left\lfloor\frac{n+1}{p'+q'}\right\rfloor = 1$, then $\left\lfloor\frac{n-1}{p'}\right\rfloor \in \{0, 1\}$. 
\begin{enumerate}
    \item[a)] Case 2.1: If 
$\left\lfloor\frac{n-1}{p'}\right\rfloor = 1$, then $n\neq p'$, and \eqref{eee12} holds.
\item[b)] Case 2.2: If $\left\lfloor\frac{n-1}{p'}\right\rfloor = 0$, then $n< p'+1$. On the other hand,  $\left\lfloor\frac{n+1}{p'+q'}\right\rfloor = 1$ implies that $n\ge p'+q'-1$. Hence,
$$p'+1 \ >\ n\ \ge\ p'+q'-1\ \Longrightarrow\ q' = 1.$$
Then \eqref{eee12} becomes
$$\sum_{k=1}^{1}\binom{n-p'k-1}{k-2}\ =\ \sum_{k=1}^{0}\binom{n-p'k-1}{k-2}+ \begin{cases}1, &\mbox{ if }n=p',\\ 0, &\mbox{ otherwise.}\end{cases}$$
Equivalently,
$$\binom{n-p-1}{-1}\ =\ \begin{cases}1, &\mbox{ if }n=p,\\ 0, &\mbox{ otherwise,}\end{cases}$$
which is true.
\end{enumerate}
\end{proof}

\subsection{Recurrence of $(|\mathcal{S}^{p/q}_{n}|)_{n=1}^\infty$ and $(|\mathcal{M}^{p/q}_n|)_{n=1}^\infty$}
By Theorem \ref{rc1}, the recurrence of $(|\mathcal{S}^{p/q}_n|)_{n=1}^\infty$ and $(|\mathcal{M}^{p/q}_{n}|)_{n=1}^\infty$ is the same as the recurrence of subsequences taken $q$-periodically out of $(a_{p,q,n})_{n=0}^\infty$ and $(a^{(m)}_{p,q, n})_{n=0}^\infty$, respectively.

\begin{defi}\normalfont
Let $p(x) = c_0+c_1x+\cdots+c_rx^r$ be a polynomial with real coefficients $(c_i)_{i=0}^r$. A sequence $(a_n)_{n=0}^\infty$ is said to \textit{satisfy} $p(x)$ if 
$$c_0a_n + c_1a_{n-1} + \cdots + c_ra_{n-r}\ =\ 0,\mbox{ for all }n\ge r.$$
Trivially, all sequences satisfy the zero polynomial. 
\end{defi}

We need the following lemma, whose proof is straightforward and therefore omitted.

\begin{lem}\label{dl}
If a sequence $(a_n)_{n=0}^\infty$ satisfies $p(x)$ and $p(x)$ divides $q(x)$, then $(a_n)_{n=0}^\infty$ satisfies $q(x)$. 
\end{lem}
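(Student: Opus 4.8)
The plan is to deduce the conclusion directly from the defining relation of ``$(a_n)_{n=0}^\infty$ satisfies $p(x)$'', using only a rearrangement of a finite sum. First I would invoke the definition of divisibility to write $q(x) = p(x)h(x)$ for some $h(x)\in\mathbb{R}[x]$. If $q$ is the zero polynomial there is nothing to prove, since every sequence satisfies it; otherwise write $p(x) = \sum_{i=0}^{r} c_i x^i$, $h(x) = \sum_{j=0}^{s} d_j x^j$, and $q(x) = \sum_{\ell=0}^{r+s} e_\ell x^\ell$, where the product formula gives $e_\ell = \sum_{i+j=\ell} c_i d_j$.

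The heart of the argument is the following computation, valid for every fixed $n\ge r+s$:
\begin{align*}
\sum_{\ell=0}^{r+s} e_\ell\, a_{n-\ell}
\ &=\ \sum_{\ell=0}^{r+s}\Big(\sum_{i+j=\ell} c_i d_j\Big) a_{n-\ell}
\ =\ \sum_{i=0}^{r}\sum_{j=0}^{s} c_i d_j\, a_{n-i-j}\\
\ &=\ \sum_{j=0}^{s} d_j\Big(\sum_{i=0}^{r} c_i\, a_{(n-j)-i}\Big).
\end{align*}
For each $j$ with $0\le j\le s$ one has $n-j \ge (r+s)-s = r$, so the inner sum $\sum_{i=0}^{r} c_i a_{(n-j)-i}$ is a genuine instance of the hypothesis that $(a_n)_{n=0}^\infty$ satisfies $p(x)$, hence equals $0$; therefore the whole expression vanishes. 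Since this holds for all $n\ge r+s = \deg q$, that is exactly the assertion that $(a_n)_{n=0}^\infty$ satisfies $q(x)$.

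I do not expect any real obstacle here: the computation is a routine interchange in a finite double sum. The single point that requires attention is the index bound $n-j\ge r$, which is precisely what licenses applying the hypothesis on $p(x)$ to the shifted index $n-j$; this is also why the conclusion is naturally stated for $n\ge r+s$ rather than for smaller $n$. Conceptually, the statement says that the collection of polynomials satisfied by a fixed sequence is an ideal of $\mathbb{R}[x]$ --- closed under addition and under multiplication by $x$, the latter being the shift --- and the displayed computation simply makes closure under multiplication by an arbitrary $h$ explicit.
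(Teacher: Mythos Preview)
Your argument is correct: the interchange of the finite double sum is valid, and the key index check $n-j\ge r$ is exactly what is needed to invoke the hypothesis at each shifted index.

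The paper's proof reaches the same conclusion by a slightly more modular route: it first shows that if $(a_n)$ satisfies $p(x)$ then it satisfies $x^\ell p(x)$ for every $\ell\ge 0$, and second that if $(a_n)$ satisfies two polynomials then it satisfies their sum; the lemma then follows since $q(x)=h(x)p(x)=\sum_j d_j x^j p(x)$. Your single computation is in effect carrying out both of these steps simultaneously --- the inner sum $\sum_i c_i a_{(n-j)-i}$ is precisely the ``shift by $j$'' step, and the outer sum over $j$ with weights $d_j$ is the ``closure under addition'' step. So the content is the same; your version is more compact, while the paper's version makes the ideal structure (closure under shift and under addition) explicit as two separate facts, which some readers may find clearer conceptually. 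Either presentation is perfectly adequate for this elementary lemma.
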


\begin{proof}[Proof of Corollaries \ref{cc1} and \ref{cc2}] We verify initial terms and prove the linear recurrence for $(|\mathcal{S}^{p/q}_n|)_{n=1}^\infty$ and $(|\mathcal{M}^{p/q}_n|)_{n=1}^\infty$. 
The formulas for initial terms are due to \eqref{eee20'} and \eqref{eee20}. We prove the recurrence relation.
By definition, both $(a_{p,q,n})_{n=0}^\infty$ and $(a^{(m)}_{p,q,n})_{n=0}^\infty$ satisfy $g_{p,q}(x):=1-x^q-x^{p+q}$. Define 
$$h_{p,q}(x)\ :=\ 1 - \sum_{i=1}^q (-1)^{i+1}\binom{q}{i}x^i - x^{p+q}.$$
We have
\begin{align*}
h_{p,q}(x^q) &\ =\ 1 + \sum_{i=1}^q (-1)^{i}\binom{q}{i}x^{qi} - x^{q(p+q)}\\
&\ =\ \sum_{i=0}^q (-1)^i \binom{q}{i}x^{qi} - x^{q(p+q)}\\
&\ =\ (1-x^q)^q - (x^{p+q})^q\\
&\ =\ (1-x^q-x^{p+q})\sum_{i=0}^{q-1} (1-x^q)^{q-1-i}x^{(p+q)i},
\end{align*}
so $g_{p,q}(x)$ divides $h_{p,q}(x^q)$. By Lemma \ref{dl}, $(a_{p,q,n})_{n=0}^\infty$ and $(a^{(m)}_{p,q,n})_{n=0}^\infty$ satisfies $h_{p,q}(x^q)$. It follows from Theorem \ref{rc1} that $(|\mathcal{S}^{p/q}_n|)_{n=1}^\infty$ and $(|\mathcal{M}^{p/q}_n|)_{n=1}^\infty$ satisfy $h_{p,q}(x)$. 
Here we use the obvious fact that for a fixed $k\in \mathbb{N}$, if $(b_n)_{n=1}^\infty$ satisfies 
$c_0 + c_k x^k + c_{2k}x^{2k} + \cdots + c_{nk} x^{nk}$,
then the subsequence $(b_{(n-1)k})_{n=1}^\infty$ satisfies 
$c_0 + c_k x + c_{2k}x^2 + \cdots + c_{nk}x^n$.\end{proof}

\begin{rek}\normalfont
While one goal of the present paper is to establish the recurrence relation of $(|\mathcal{S}_n^{p/q}|)_{n=1}^\infty$ and $(|\mathcal{M}^{p/q}_n|)_{n=1}^\infty$ without using the Inclusion-Exclusion Principle as in \cite{BCF}, we can use the same argument in the proof of Corollaries \ref{cc1} and \ref{cc2} to show that \eqref{eee18} is a corollary of \cite[Theorem 1.1]{BCF}. Indeed, \eqref{ee1} states that
$(|\mathcal{S}^{p/q}_n|)_{n=1}^\infty$ satisfies 
$$h_{p,q}(x)\ =\ 1 + \sum_{i=1}^q (-1)^{i}\binom{q}{i}x^i - x^{p+q}\ =\ (1-x)^q - x^{p+q}.$$
By definition, $(a_{p,q,n})_{n=0}^\infty$ satisfies $g_{p,q}(x) = 1 - x^q - x^{p+q}$. Since $g_{p,q}(x)$ divides $h_{p,q}(x^q) = (1-x^q)^q - x^{q(p+q)}$, we know that $(a_{p,q,n})_{n=0}^\infty$ satisfies
$h_{p,q}(x^q)$. It follows that $(a_{p,q,(n-1)q})_{n=1}^\infty$ satisfies $h_{p,q}(x)$. Since $(|\mathcal{S}^{p/q}_n|)_{n=1}^\infty$ and $(a_{p,q,(n-1)q})_{n=1}^\infty$ satisfy the same polynomial and have the same initial terms, the two sequences are identical. 
\end{rek}

\section{Relation between $(|\mathcal{S}_n^{p/q}|)_{n=1}^\infty$ and $(|\mathcal{M}^{p/q}_n|)_{n=1}^\infty$ and future investigations}\label{relate}

It is natural to ask about the relation between the two sequences $(|\mathcal{S}_n^{p/q}|)_{n=1}^\infty$ and $(|\mathcal{M}^{p/q}_n|)_{n=1}^\infty$: given one of the sequences, can we compute the other? This section presents the two special cases when $p = 1$ or $q = 1$ of the following problem.

\begin{prob}\normalfont
    For $(p,q) \in \mathbb{N}^2$, find the constants $(c_i)_{i=1}^\infty$ (if any) such that
    $$|\mathcal{M}^{p/q}_n| = \sum_{i=1}^\infty c_i |\mathcal{S}^{p/q}_i|, \mbox{ for all }n\in \mathbb{N}.$$
\end{prob}

\begin{thm}
    For $p, n\in \mathbb{N}$, it holds that 
    $$|\mathcal{S}_n^{p/1}|\ =\ |\mathcal{M}^{p/1}_{n+p+1}|.$$
\end{thm}

\begin{proof}
    By \eqref{eee20'} and \eqref{eee20}, we have
    $$|\mathcal{S}_n^{p/1}|\ =\ \sum_{i=1}^{\left\lfloor\frac{n+1}{p+1}\right\rfloor}\binom{n-pi}{i-1}\mbox{ and }|\mathcal{M}_n^{p/1}|\ =\ \sum_{i=1}^{\left\lfloor\frac{n+1}{p+1}\right\rfloor}\binom{n-pi-1}{i-2}.$$
    Hence,
    \begin{align*}|\mathcal{M}_{n+p+1}^{p/1}|\ =\ \sum_{i=1}^{\left\lfloor\frac{n+p+2}{p+1}\right\rfloor}\binom{n+p+1-pi-1}{i-2}&\ =\ \sum_{i=1}^{\left\lfloor\frac{n+1}{p+1}\right\rfloor+1}\binom{n-p(i-1)}{i-2}\\
    &\ =\ \sum_{i=0}^{\left\lfloor\frac{n+1}{p+1}\right\rfloor}\binom{n-pi}{i-1}\\
    &\ =\ \sum_{i=1}^{\left\lfloor\frac{n+1}{p+1}\right\rfloor}\binom{n-pi}{i-1}\ =\ |\mathcal{S}_n^{p/1}|.
    \end{align*}
\end{proof}

\begin{thm}\label{m3}
For $q, n\in \mathbb{N}$, it holds that
\begin{equation}\label{e10q0}|\mathcal{M}_n^{1/q}|\ =\ 2|\mathcal{S}_n^{1/q}| - |\mathcal{S}^{1/q}_{n+1}|.\end{equation}
\end{thm}

\begin{proof}
Let $n\in \mathbb{N}$. When $q = 1$, $|\mathcal{S}_n^{1/1}| = F_n$, $|\mathcal{S}_{n+1}^{1/1}| = F_{n+1}$, and $|\mathcal{M}_n^{1/1}| = F_{n-2}$ (see \eqref{e0} and \eqref{e1p1}). Then \eqref{e10q0} holds. For the rest of the proof, assume that $q\ge 2$. 

We need to bijectively map two copies of $\mathcal{S}^{1/q}_{n}$ to $\mathcal{S}^{1/q}_{n+1}\cup \mathcal{M}^{1/q}_{n}$. 
Observe that 
\begin{align*}
\phi: \mbox{ }\mathcal{S}^{1/q}_{n}\ &\rightarrow\ \mathcal{S}^{1/q}_{n+1}\\ 
F\ &\mapsto\ F+q
\end{align*}
maps $\mathcal{S}^{1/q}_{n}$ injectively into $\mathcal{S}^{1/q}_{n+1}$. Furthermore, $\mathcal{M}^{1/q}_{n}\subset \mathcal{S}^{1/q}_{n}$. Hence, it suffices to show that 
\begin{equation}\label{e1}|\mathcal{S}^{1/q}_{n}\backslash \mathcal{M}^{1/q}_{n}|\ =\ |\mathcal{S}^{1/q}_{ n+1}\backslash \phi(\mathcal{S}^{1/q}_{n})|.\end{equation}

The left side of \eqref{e1} is the cardinality of 
$$L_{q,n}\ :=\ \{F\subset \{1, 2, \ldots, n\}\,:\, q\min F > |F|\mbox{ and }\max F = n\},$$
which can be partitioned into the collections
$$L_{q, n,i}\ :=\ \{F\subset \{1, 2, \ldots, n\}\,:\, q\min F > |F| = i\mbox{ and }\max F = n\}, \mbox{ with }i\ge 1.$$
For $i = 1$, the only set in $L_{q, n,1}$ is $\{n\}$, so $|L_{q, n, 1}| = 1$. For $i\ge 2$, to form sets in $L_{q, n, i}$, we first choose the minimum $j$ with $i/q < j\le n-1$ then choose $i-2$ integers in $[j+1, n-1]$, which requires  $i-2\le n-j-1$. Hence, for $i\ge 2$, 
$$|L_{q, n, i}|\ =\ \sum_{\substack{i+1\le jq\le (n-1)q\\ j\le n-i+1}}\binom{n-j-1}{i-2}\ =\ \sum_{i+1\le jq\le (n-i+1)q}\binom{n-j-1}{i-2}.$$

The right side of \eqref{e1} is the cardinality of
$$R_{q, n}\ :=\ \{F\subset \{1, 2, \ldots, n+1\}\,:\, q\min F \ge |F| > q\min F - q, \max F = n+1\},$$
which can be partitioned into 
\begin{align*}
    R_{q, n, i}\ :=\  \{F\subset \{1, 2, \ldots, n+1\}\,:\, q\min F &\ge |F| = i > q\min F - q,\\
    &\max F = n+1\},\mbox{ with }i\ge 1.
\end{align*}
We have $R_{q, n, 1} = \emptyset$ because a nonempty $R_{q, n,1}$ requires $1 > (n+1)q-q = nq\ge 2$. For $i = 2$, an
$F\in R_{q, n, 2}$ must have the form $F = \{j, n+1\}$ with $1\le j\le n$ and $jq\ge 2 > (j-1)q$. Since $q\ge 2$, the inequality $2 > (j-1)q$ implies that $j=1$. Hence, $R_{q, n, 2} = \{1, n+1\}$ and $|R_{q, n, 2}| = 1$. To form sets in $R_{q, n, i}$ with $i \ge 3$, we first choose the minimum $j$ with $j-1 < i/q \le j\le n$ then choose $i-2$ integers in $[j+1, n]$, which requires
$n-j\ge i-2$. Hence, for $i\ge 3$,
$$|R_{q, n, i}| \ =\ \sum_{\substack{1+q(j-1) \le i\le jq\le nq\\j\le n+2-i}}\binom{n-j}{i-2}\ =\ \sum_{\substack{i\le jq\le i+q-1\\ j\le n-i+2}}\binom{n-j}{i-2}.
$$

To prove \eqref{e1}, we show that 
\begin{equation}\label{e2}|L_{q, n, i}|\ =\ |R_{q, n, i+1}|, \mbox{ for }i\ge 1.\end{equation}
The above analysis shows that \eqref{e2} holds when $i = 1$. For $i\ge 2$, \eqref{e2} is the same as
\begin{equation}\label{e3}\sum_{i+1\le jq\le (n-i+1)q}\binom{n-j-1}{i-2}\ =\ \sum_{\substack{i+1\le jq\le i+q\\ j\le n-i+1}}\binom{n-j}{i-1}.\end{equation}

Case 1: $(n-i+1)q\le i+q$. We have
\begin{align}\label{e21}
    &\sum_{i+1\le jq\le (n-i+1)q}\binom{n-j-1}{i-2}-\sum_{\substack{i+1\le jq\le i+q\\ j\le n-i+1}}\binom{n-j}{i-1}\nonumber\\
    &\ =\  \sum_{i+1\le jq\le (n-i+1)q}\binom{n-j-1}{i-2}-\sum_{i+1\le jq\le (n-i+1)q}\binom{n-j}{i-1}.
\end{align}
Since $(n-i+1)q\le i+q$, 
$$n\ \le\ i+\frac{i}{q}\ =\ i+\frac{(i+1)-1}{q}\ \le\ i+\frac{jq-1}{q}\ <\ i+j.$$
Hence, $n\le i + j-1$, but \eqref{e21} sums over $j\le n-i+1$. Therefore, $n = i + j - 1$ in \eqref{e21}, which gives
\begin{align*}&\sum_{i+1\le jq\le (n-i+1)q}\binom{n-j-1}{i-2}-\sum_{i+1\le jq\le (n-i+1)q}\binom{n-j}{i-1}\\
&\ =\ \sum_{i+1\le jq\le (n-i+1)q} 1 - \sum_{i+1\le jq\le (n-i+1)q}1\ =\ 0.
\end{align*}

Case 2: $(n-i+1)q > i+q$. Then \eqref{e3} is equivalent to
\begin{equation*}\sum_{i+1\le jq\le (n-i+1)q}\binom{n-j-1}{i-2}\ =\ \binom{n-\lceil\frac{i+1}{q}\rceil}{i-1},\end{equation*}
which is precisely the hockey-stick identity (see \cite[Theorem 1.2.3 item (5)]{We}).

We have verified \eqref{e3} and thus completed the proof.
\end{proof}

\section{Acknowledgment}
The authors would like to thank the anonymous referee for a careful reading of the paper and for the helpful suggestions.

This work is partially supported by the College of Arts \& Sciences at Texas A\&M University. The second named author is an undergraduate at Texas A\&M University, working under the guidance of the first named author.


\ \\
\end{document}